\documentclass{amsart}

\usepackage{amssymb}

\usepackage[utf8]{inputenc}

\newcommand{\Aut}{\ensuremath{\operatorname{Aut}}}

\newcommand{\Iso}{\ensuremath{\operatorname{Iso}}}
\newcommand{\GL}{\ensuremath{\operatorname{GL}}}
\newcommand{\SL}{\ensuremath{\operatorname{SL}}}
\newcommand{\G}{\ensuremath{\Gamma}}
\newcommand{\E}{\ensuremath{\operatorname{E}}}
\newcommand{\Or}{\ensuremath{\operatorname{O}}}

\newcommand{\R}{\ensuremath{\mathbb R}}

\newcommand{\Z}{\ensuremath{\mathbb Z}}

\newcommand{\N}{\ensuremath{\mathbb N}}

\newcommand{\ses}[5]{#1 \longrightarrow #2 \longrightarrow #3 \longrightarrow #4 \longrightarrow #5}

\newcommand{\st}{\ensuremath{|\;}}

\newtheorem{thm}{Theorem}
\newtheorem{prop}[thm]{Proposition}

\newtheorem{cor}{Corollary}
\theoremstyle{remark}
\newtheorem{rem}{Remark}
\theoremstyle{definition}
\newtheorem{df}{Definition}
\newtheorem{ex}{Example}

\DeclareMathOperator{\diag}{diag}


\author{Rafał Lutowski}
\thanks{The author was supported by the National Science Center Poland grant No. 2013/09/B/ST1/04125.}
\title{Irreducible euclidean representations of Fibonacci groups}
\keywords{Hantzsche-Wendt, Fibonacci, Bieberbach, crystallographic group}
\subjclass[2010]{Primary: 20H15, Secondary: 20F05}

\begin{document}

\maketitle

\begin{abstract}
We show that every Hantzsche-Wendt group is an epimorphic image of a certain Fibonacci group. 
\end{abstract}

\section{Introduction}

Let $n \in \N$. A group $\G$ is called an \emph{$n$-dimensional crystallographic} if it is a discrete and cocompact subgroup of $\E(n) = \Or(n) \ltimes \R^n$ -- the group of isometries of the $n$-dimensional euclidean space. By the Bieberbach theorems (see \cite{Bi11},\cite{Bi12}, \cite[Theorem 2.1]{Sz12}) $\G$ fits into the following short exact sequence
\begin{equation}
\label{eq:cryst}
\ses{0}{\Z^n}{\G}{G}{1},
\end{equation}
where $G \subset \GL(n,\Z)$ is a finite group, so called holonomy group of $\G$ and $\Z^n$ is a faithful $G$-module, with the action defined by the left matrix multiplication.

A torsionfree crystallographic group $\G \subset \E(n)$ is called a \emph{Bieberbach group}. In this case the orbit space $X = \R^n/\G$ is a flat manifold -- a closed connected Riemannian manifold with constant sectional curvature equal to zero. Moreover $\G \cong \pi_1(X)$.

A Bieberbach group $\G$, defined by the short exact sequence \eqref{eq:cryst}, is a \emph{Hantzsche-Wendt group} if $G \subset \SL(n,\Z)$ and $G \cong C_2^{n-1}$, where $C_2$ is a cyclic group of order $2$, i.e. $G$ is an elementary abelian $2$-group of rank $n-1$. An underlying manifold $\R^n/\G$ is called a \emph{Hantzsche-Wendt manifold}. Hantzsche-Wendt groups and manifolds exist only in odd dimensions greater or equal than $3$ (see \cite[page 2]{MR99}). In dimension $3$ there is only one Hantzsche-Wendt manifold, which is the only $3$-dimensional orientable flat manifold. The non-extensive list of results concerning Hantzsche-Wendt groups and manifolds contains
\begin{itemize}
\item homological description of Hantzsche-Wendt manifolds -- all of them are rational homology spheres (see \cite{Sz83});
\item the form of the holonomy group -- we can always find a HW-group, isomorphic to the given one, such that its holonomy group contains diagonal matrices only (see \cite{RS05});
\item abelianization -- starting from dimension 5 the abelianization of a HW-group is isomorphic to its holonomy group (see \cite{P07}).
\end{itemize}

Let $r,n \in \N$. A Fibonacci group $F(r,n)$ is a group with the presentation
\[
\begin{split}
F(r,n) = \langle a_0, a_1, \ldots, a_{n-1} \st 
& a_0 a_1 \ldots a_{r-1} = a_r,\\
& a_1 a_2 \ldots a_{r} = a_{r+1},\\
& \vdots \\
& a_{n-1}a_0 \ldots a_{r-2} = a_{r-1} \rangle.\\
\end{split}
\]
The following facts illustrate a connection between Fibonacci groups and geometry:
\begin{itemize}
\item $F(2,6)$ is isomorphic to the 3-dimensional Hantzsche-Wendt group;
\item for $n \geq 4$ there exists a closed hyperbolic manifold with fundamental group isomorphic to $F(2,2n)$ (see \cite{HKM98});
\item for odd $n$ the Fibonacci group $F(2,n)$ cannot be a fundamental group of any hyperbolic 3-orbifold of finite volume (see \cite{M95}).
\end{itemize}

In the paper \cite{Sz01} a connection between Fibonacci and Hantzsche-Wendt groups is presented. Namely, for every odd $n \geq 3$ there exists HW-group of dimension $n$ which is an epimorphic image of the Fibonacci group $F(n-1,2n)$. In the review of the paper Juan Pablo Rossetti points out that there may exist epimorphisms onto other HW-groups as well. In our paper we prove that in fact every $n$-dimensional HW-group is an image of the group $F(n-1,2n)$, i.e. all HW-groups of the same dimension share the same non-trivial relations.

In Section 2 we present a way to decompose euclidean representations, which in general are not linear. We use this procedure to decompose (the identity map on) every Hantzsche-Wendt group. In the following section we show a construction of an epimorphism of the group $F(n-1, 2n)$ to a certain one dimensional crystallographic group, for every odd $n \geq 3$. We use this construction to prove our main theorem in Section 4. 

\section{Decompositions of euclidean representations}

\begin{df}
An \emph{euclidean representation} of a group $G$ is a homomorphism $\varphi \colon G \to \E(n)$.
\end{df}

\begin{ex}[{\cite[Theorem 1]{Sz01}}]
\label{ex:cyclichw}
Let $n \geq 3$ be an odd integer. Let $\Gamma_n$ be a HW-group generated by the elements $(B_i,b_i) \in \E(n)$, where
\[
B_i = \diag(\underbrace{-1,\ldots,-1}_{i-1}, 1, -1, \ldots, -1)
\]
and
\[
b_i = \frac{1}{2}(e_i+e_{i+1})
\]
for $i=1,\ldots,n-1$ and $e_1,\ldots,e_n$ being the standard basis' vectors. Then there exists an epimorphism
\[
\Phi_n \colon F(n-1, 2n) \to \Gamma_n \subset \E(n).
\]
\end{ex}

\begin{rem}
If $V = V_1 \oplus V_2$ is a direct sum decomposition of a vector space $V$ and $f_i \colon V_i \to V_i$ are \emph{maps} of vector spaces, for $i=1,2$, then $f_1 \oplus f_2 \colon V \to V$ is a map defined by the following formula
\[
\forall_{v_1 \in V_2} \forall_{v_2 \in V_2} \; (f_1 \oplus f_2) (v_1 + v_2) = f_1(v_1) + f_2(v_2).
\] 
In addition if we have representations $\varphi^{(i)} \colon G \to \Iso(V_i)$ of a group $G$ to the groups of isometries of spaces $V_i$, for $i=1,2$, we can say about direct sum of those representations:
\[
\forall_{g \in G} \forall_{v_1 \in V_2} \forall_{v_2 \in V_2} (\varphi^{(1)} \oplus \varphi^{(2)})_g(v_1 + v_2) = \varphi^{(1)}_g(v_1) + \varphi^{(2)}_g(v_2).
\]
\end{rem}

The above decomposition will be useful for our purposes, but we have to remember that euclidean representations are not linear in general. The following example shows that in our case even irreducible representations can be decomposed in the spirit given above.

\begin{ex}
Let $\varphi_1,\varphi_2 \colon \Z \oplus \Z \to \E(1) = \Or(1) \ltimes \R$ be one dimensional euclidean representations, given by
\begin{align*}
\varphi_1(1,0) = (1,1), \quad \varphi_1(0,1) = (1,0), \\
\varphi_2(1,0) = (1,0), \quad \varphi_2(0,1) = (1,1).
\end{align*}
Then the direct sum of the two representations 
\[
\varphi_1 \oplus \varphi_2 \colon \Z \oplus \Z \to \E(2) = \Or(2) \ltimes \R^2
\]
is defined as follows
\[
(\varphi_1 \oplus \varphi_2)(1,0) = 
\left(
\begin{bmatrix}
1 & 0\\
0 & 1
\end{bmatrix},
\begin{bmatrix}
1 \\
0 
\end{bmatrix}
\right)
\text{ and }
(\varphi_1 \oplus \varphi_2)(0,1) = 
\left(
\begin{bmatrix}
1 & 0\\
0 & 1
\end{bmatrix},
\begin{bmatrix}
0 \\
1 
\end{bmatrix}
\right).
\]
It is obvious that the representation $\varphi_1 \oplus \varphi_2$ does not have any proper invariant subspace -- it is irreducible.
\end{ex}

The following proposition is in fact a recipe for decomposing euclidean representations. Let $n \in \N$ and let $r \colon \E(n) \to \Or(n)$ be the rotational homomorphism:
\[
\forall_{(B,b) \in \E(n)} \; r(B,b) = B.
\]
Let $\varphi \colon \Gamma \to E(n)$ be an euclidean representation of a group $\Gamma$. We get a linear representation $r \varphi \colon \Gamma \to \Or(n)$. Let
\begin{equation}
\label{eq:decomp}
\R^n = V_1 \oplus \ldots \oplus V_k
\end{equation}
be a decomposition of $r \varphi$ with corresponding projections $p_i \colon \R^n \to V_i$ for $i=1,\ldots,k$.

\begin{prop}
We have
\[
\varphi = \varphi^{(1)} \oplus \ldots \oplus \varphi^{(k)}.
\]
In the above formula $\varphi^{(i)} \colon \Gamma \to \Iso(V_i)$ is given by
\[
\forall_{v \in V_i} \varphi_g^{(i)} (v) = (B, p_i(b))v = Bv + p_i(b),
\]
where $g \in \Gamma, \varphi_g = (B,b) \in \E(n)$ and $1 \leq i \leq k$.
\end{prop}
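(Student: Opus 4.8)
The plan is to verify directly the three assertions bundled into the statement: that each $\varphi^{(i)}$ actually takes values in $\Iso(V_i)$, that each $\varphi^{(i)}$ is a homomorphism, and that the direct sum $\varphi^{(1)} \oplus \ldots \oplus \varphi^{(k)}$ recovers $\varphi$. The single fact that drives everything is that each projection $p_i$ commutes with every orthogonal operator of the form $B = r\varphi_g$. Indeed, since \eqref{eq:decomp} is a decomposition of the \emph{linear} representation $r\varphi$, each summand $V_j$ is invariant under every such $B$; writing an arbitrary $w \in \R^n$ as $w = \sum_j w_j$ with $w_j \in V_j$, we get $Bw = \sum_j Bw_j$ with $Bw_j \in V_j$, whence $p_i(Bw) = Bw_i = Bp_i(w)$. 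I would record this identity $p_i B = B p_i$ at the outset, as it is used in every subsequent step.

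With this in hand, well-definedness is immediate: for $v \in V_i$ and $\varphi_g = (B,b)$, the vector $Bv$ lies in $V_i$ by invariance while $p_i(b) \in V_i$ by construction, so $\varphi^{(i)}_g(v) = Bv + p_i(b) \in V_i$. Moreover $B|_{V_i}$ preserves the inner product restricted to $V_i$ (being the restriction of an orthogonal map to an invariant subspace), so $\varphi^{(i)}_g$ is a genuine isometry of $V_i$, i.e.\ $\varphi^{(i)}_g \in \Iso(V_i)$. The crux is the homomorphism property, and it is the only point I expect to require care. Given $g,h \in \Gamma$ with $\varphi_g = (B,b)$ and $\varphi_h = (C,c)$, the semidirect product law in $\E(n)$ gives $\varphi_{gh} = (BC,\, Bc + b)$, so that $\varphi^{(i)}_{gh}(v) = BCv + p_i(Bc + b)$. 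On the other hand $\varphi^{(i)}_g(\varphi^{(i)}_h(v)) = B\bigl(Cv + p_i(c)\bigr) + p_i(b) = BCv + Bp_i(c) + p_i(b)$. These coincide precisely because $p_i$ is linear and $p_i(Bc) = Bp_i(c)$ by the commutation identity; this is exactly the place where the invariance of the decomposition is indispensable.

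Finally I would assemble the direct sum. For $v = v_1 + \ldots + v_k$ with $v_i \in V_i$, the definition of direct sum from the Remark yields
\[
(\varphi^{(1)} \oplus \ldots \oplus \varphi^{(k)})_g(v) = \sum_{i=1}^k \varphi^{(i)}_g(v_i) = \sum_{i=1}^k \bigl(Bv_i + p_i(b)\bigr) = B\sum_{i=1}^k v_i + \sum_{i=1}^k p_i(b) = Bv + b = \varphi_g(v),
\]
where I use $\sum_{i=1}^k p_i = \mathrm{id}_{\R^n}$. This matches $\varphi_g$ on all of $\R^n$, completing the argument. The only genuinely conceptual ingredient is the commutation $p_i B = B p_i$ coming from the equivariance of the projections; everything else is bookkeeping with the affine multiplication law.
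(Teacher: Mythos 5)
Your proof is correct; the paper actually states this proposition without any proof, treating the verification as routine, and your argument supplies exactly that verification along the only natural route. The one genuinely substantive point --- that invariance of each $V_j$ under every $B = r\varphi_g$ forces the commutation $p_i B = B p_i$, which is precisely what makes each $\varphi^{(i)}$ a well-defined homomorphism into $\Iso(V_i)$ --- is correctly identified and proved, and the remaining computations with the product law $(B,b)(C,c) = (BC, Bc+b)$ and the identity $\sum_{i=1}^k p_i = \mathrm{id}_{\R^n}$ are accurate.
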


\begin{rem}
Note that $r \varphi$ is a unitary linear representation and hence the decomposition \eqref{eq:decomp} can be made in such a way that the constituents are irreducible.
\end{rem}


Our goal is to decompose every HW-group in the above fashion. We begin with a description of a form of any HW-group.

\begin{thm}[{\cite[Theorem 3.1]{RS05}}]
\label{thm:hwform}
Let $\Gamma$ be an $n$-dimensional HW-group. Then
\[
\Gamma \cong \langle (B_i, b_i) \in \E(n) \st 1 \leq i < n \rangle \subset \E(n),
\]
where $B_i$ are the matrices given in Example \ref{ex:cyclichw} and $b_i \in \frac{1}{2}\Z^n$ for $1 \leq i < n$.
\end{thm}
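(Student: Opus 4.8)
The plan is to produce an explicit integral change of basis diagonalizing the holonomy, and the argument falls into three stages: pinning down the rational holonomy representation, upgrading it to an integral (lattice) decomposition, and normalizing the translational parts. Throughout let $\varphi$ denote the defining inclusion $\Gamma \hookrightarrow \E(n)$, so that $r\varphi \colon \Gamma \to \Or(n)$ is the linear holonomy and $M = \Z^n$ the translation lattice.

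First I would determine the rational representation $r\varphi \otimes \Q$. Since $G \cong C_2^{n-1}$ is elementary abelian, $r\varphi(G)$ consists of commuting involutions, so $\Q^n = \bigoplus_\chi m_\chi V_\chi$ splits into $1$-dimensional character spaces indexed by $\chi \colon G \to \{\pm 1\}$. To fix the multiplicities I would use that Hantzsche-Wendt manifolds are rational homology spheres (\cite{Sz83}): for a flat manifold $H^k(\R^n/\Gamma; \Q) \cong (\Lambda^k \Q^n)^G$, so these invariants vanish for $0 < k < n$. Reading this off the diagonal action forces the multiset of occurring characters $\{\chi_1,\dots,\chi_n\} \subset \widehat G$ to be a circuit: the only subsets with trivial product are the empty set and the whole set (the latter reflecting $\det = 1$). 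A short $\mathbb F_2$-linear-algebra argument then shows that every configuration of $n$ vectors forming a circuit in the $(n-1)$-dimensional space $\widehat G$ is carried by an element of $\Aut(G)$ to the characters of the matrices $B_i$. In particular $r\varphi \otimes \Q$ is, up to $\Aut(G)$, the standard diagonal representation, and for each $j$ the group $G$ contains an element $g_j$ whose $(+1)$-eigenspace is exactly the line $\Q w_j$ spanned by the primitive integral vector $w_j$ on the $j$-th eigenline.

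The heart of the proof — and the step I expect to be the main obstacle — is passing from this rational splitting to an integral one, i.e.\ showing that the $\Z[G]$-lattice $M$ is isomorphic to $\bigoplus_j \Z_{\chi_j}$. This does not follow from representation theory alone: $\Z[C_2^{n-1}]$ has wild representation type, and a multiplicity-free rational type does not force an integral splitting (already the regular representation of $C_2$ is a counterexample). The extra ingredient is torsion-freeness. Restricting the extension $\ses{0}{\Z^n}{\Gamma}{G}{1}$ to each $\langle g_j \rangle \cong C_2$, torsion-freeness forbids an order-$2$ lift of $g_j$, so the restricted extension is non-split and $H^2(\langle g_j\rangle, M) \cong \widehat H^0(\langle g_j\rangle, M) = M^{g_j}/(1+g_j)M$ is non-zero. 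Decomposing $M$ over $\Z[\langle g_j\rangle]$ into the three indecomposables $\Z_+, \Z_-, \Z[C_2]$ with multiplicities $a,b,c$, one computes $\widehat H^0 = (\Z/2)^a$; since the $(+1)$-eigenspace of $g_j$ is one-dimensional we have $a + c = 1$, and non-vanishing of $\widehat H^0$ forces $a = 1$, $c = 0$. Hence $M = M^{g_j} \oplus M_{g_j}$ with $M^{g_j} = \Z w_j$ a rank-one primitive direct summand and $M_{g_j} \subseteq \bigoplus_{k \neq j} \Q w_k$. Because this decomposition refines the rational eigenline decomposition, the projection $M \to \Z w_j$ records exactly the $\Q w_j$-coordinate; running this over all $j$ shows every $x \in M$ has integral coordinates in the basis $w_1,\dots,w_n$, so $M = \bigoplus_j \Z w_j$ and the holonomy is simultaneously diagonalized to $\langle B_1, \dots, B_{n-1}\rangle$.

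Finally I would normalize the translational parts. Writing $\Gamma = \langle (B_i, b_i)\rangle$ in the new basis, the relation $B_i^2 = I$ gives $(B_i, b_i)^2 = (I, (B_i + I)b_i) \in \Z^n$, and since $B_i + I = \diag(0,\dots,0,2,0,\dots,0)$ this already places the $i$-th coordinate of $b_i$ in $\tfrac12\Z$. More generally the square relation $(\rho(g)+I)t_g \in \Z^n$ for every $g \in G$ forces half-integrality of the fixed-space coordinates of each translational part, and since every coordinate direction is the $(+1)$-eigendirection of some holonomy element, a standard choice of coset representatives lands every $b_i$ in $\tfrac12\Z^n$, yielding the asserted normal form. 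The delicate points here are purely cohomological bookkeeping, whereas the genuine difficulty remains the integral diagonalization handled above.
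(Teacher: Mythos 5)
First, a point of reference: the paper contains no proof of this theorem at all --- it is quoted from \cite[Theorem 3.1]{RS05} --- so your proposal has to stand on its own. Its core does stand. The integral diagonalization (your middle stage) is correct and is essentially the known argument: torsion-freeness forces the restriction of the extension class to each $\langle g_j\rangle\cong C_2$ to be non-zero in $H^2(\langle g_j\rangle,\Z^n)\cong\widehat{H}^0(\langle g_j\rangle,\Z^n)$; the classification of $\Z[C_2]$-lattices together with the count $a+c=1$ then gives $a=1$, $c=0$, so $\Z w_j=\Z^n\cap\Q w_j$ splits off as a direct summand, and running the resulting integral coordinate projections over all $j$ yields $\Z^n=\bigoplus_j\Z w_j$. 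One remark on your first stage: the appeal to the rational homology sphere property of \cite{Sz83} is an unnecessary detour, and would be circular if that result were established only for groups already in diagonal form. Faithfulness, $\det=1$ and $|G|=2^{n-1}$ suffice: the map $g\mapsto(\chi_1(g),\dots,\chi_n(g))$ embeds $G$ into the subgroup of $\{\pm 1\}^n$ of tuples with product $1$, which has order exactly $2^{n-1}$, so the map is an isomorphism onto it and the $\chi_j$ are precisely the coordinate characters; your circuit discussion then becomes superfluous.

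The genuine gap is in the last step. A ``choice of coset representatives'' changes each $b_i$ only by a vector of $\Z^n$, so it can never create half-integrality that is not already present --- and it need not be present: conjugating the standard $3$-dimensional HW group by $(I,t)$ with $t$ irrational leaves the lattice $\Z^3$ and the diagonal holonomy untouched, yet gives generators whose translational parts have irrational entries, which no shift by integer vectors can repair. (Your square relations do not exclude this, since they constrain $(t_g)_k$ only for those coordinates $k$ with $\chi_k(g)=+1$.) What is actually needed is conjugation by a translation, i.e. modifying the cocycle $G\to\R^n/\Z^n$, $g\mapsto t_g$, by a coboundary $(I-g)s$ with $s\in\R^n$ not assumed half-integral. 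This is easy coordinate by coordinate: your square-relation observation says exactly that $(t_h)_k\in\frac{1}{2}\Z/\Z$ for all $h\in\ker\chi_k$; for $g\notin\ker\chi_k$ the cocycle identity $t_{g_0h}=t_{g_0}+\chi_k(g_0)t_h$ shows $(t_g)_k\equiv(t_{g_0})_k\pmod{\frac{1}{2}\Z/\Z}$ for a fixed $g_0\notin\ker\chi_k$, so choosing $s_k$ with $2s_k\equiv(t_{g_0})_k$ pushes every value of the $k$-th coordinate into $\frac{1}{2}\Z/\Z$. Since the theorem asserts only an isomorphism (affine conjugacy), this conjugation is legitimate, and with this substitution in place of the coset-representative claim your proof closes.
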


\begin{rem}
The group $\langle (B_i, b_i) \in \E(n) \st 1 \leq i < n \rangle$ is crystallographic and hence it is a Hantzsche-Wendt group. From now on we will assume that every HW-group is in the above form.
\end{rem}

\begin{cor}
\label{cor:hwdecomp}
Let $n \in \N$ be odd. Let $\Gamma$ be an $n$-dimensional HW-group. Then the identity map on $\G$ admits the decomposition
\[
id_\Gamma = \varphi^{(1)} \oplus \ldots \oplus \varphi^{(n)}.
\]
For every $1 \leq i \leq n$ the homomorphism $\varphi^{(i)} \colon \Gamma \to E(1)$ is given by
\[
\varphi^{(i)}(C,c) = (C_i, c_i),
\]
where 
\[
(C,c) =
\left(
\begin{bmatrix}
C_1 & & \\
 & \ddots &\\
 & & C_n
\end{bmatrix},
\begin{bmatrix}
c_1\\
\vdots\\
c_n
\end{bmatrix}
\right)
\in \Gamma.
\]
\end{cor}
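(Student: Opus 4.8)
The plan is to apply the preceding Proposition to the euclidean representation $\varphi = id_\Gamma$, viewed as the inclusion $\Gamma \subset \E(n)$, together with the coordinate decomposition
\[
\R^n = V_1 \oplus \ldots \oplus V_n, \qquad V_i = \vspan(e_i).
\]
The key point that makes this a legitimate decomposition of the linear representation $r \circ id_\Gamma \colon \Gamma \to \Or(n)$ is that every element of $\Gamma$ has diagonal rotational part. Indeed, by Theorem \ref{thm:hwform} the generators carry the matrices $B_i$ of Example \ref{ex:cyclichw}, which are diagonal, and a product of diagonal matrices is again diagonal; hence the image of $r \circ id_\Gamma$ lies in the group of diagonal matrices. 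Consequently each coordinate axis $V_i$ is $r\varphi$-invariant, so $\R^n = V_1 \oplus \ldots \oplus V_n$ is a genuine decomposition of $r\varphi$ into one-dimensional, and thus irreducible, constituents, exactly as the Proposition requires.

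First I would record the projections $p_i \colon \R^n \to V_i$ attached to this decomposition: each $p_i$ picks out the $i$-th coordinate, $p_i(x) = x_i e_i$. The Proposition then yields directly
\[
id_\Gamma = \varphi^{(1)} \oplus \ldots \oplus \varphi^{(n)},
\]
with $\varphi^{(i)} \colon \Gamma \to \Iso(V_i)$ given by $\varphi^{(i)}_{(C,c)}(v) = Cv + p_i(c)$ for $v \in V_i$; note that $Cv \in V_i$ precisely because $V_i$ is invariant, so this is indeed an isometry of $V_i$.

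The remaining step is to identify $\Iso(V_i)$ with $\E(1)$ and to read off the stated formula. Under the linear isomorphism $V_i \cong \R$, $t e_i \mapsto t$, the restriction of the diagonal matrix $C$ to $V_i$ acts as multiplication by its $i$-th diagonal entry $C_i \in \Or(1) = \{\pm 1\}$, while the translation $p_i(c) = c_i e_i$ corresponds to $c_i \in \R$. Thus $\varphi^{(i)}_{(C,c)}$ becomes the affine map $t \mapsto C_i t + c_i$ of $\R$, that is, the element $(C_i, c_i) \in \E(1)$, which is the asserted formula.

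I do not expect a serious obstacle here: the corollary is essentially a specialization of the Proposition, and the only substantive input is the observation that the diagonal form of the holonomy, guaranteed by Theorem \ref{thm:hwform}, forces the coordinate axes to be invariant. The single point demanding care is the bookkeeping of the identification $V_i \cong \R$, so that the restricted rotational part is correctly read as an element of $\Or(1)$ and the projected translation as a real number.
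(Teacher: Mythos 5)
Your proof is correct and follows exactly the route the paper intends: the corollary is stated without proof precisely because it is the Proposition applied to $id_\Gamma$ with the coordinate decomposition $\R^n = \vspan(e_1) \oplus \ldots \oplus \vspan(e_n)$, which is invariant since Theorem \ref{thm:hwform} puts the rotational parts in diagonal form. Your careful handling of the identification $\Iso(V_i) \cong \E(1)$ fills in the only detail the paper leaves implicit.
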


\section{One dimensional representations of the Fibonacci groups}

This section is devoted to the construction of epimorphisms of the Fibonacci group $F(n-1,2n)$ onto a subgroup of $\E(1)$ of a certain type.

\begin{thm}
\label{thm:onedimrep}
Let $n \geq 3$ be an odd integer.
Let $d_0,\ldots,d_{n-2} \in \R$ and let $\Gamma$ be a one dimensional crystallographic group generated by the elements
\begin{equation}
\label{eq:ds}
D_0 = (1,d_0), D_1 = (-1,d_1), \ldots, D_{n-2} = (-1,d_{n-2}).
\end{equation}
Then there exists an epimorphism
\[
\Phi \colon F(n-1,2n) \to \Gamma,
\]
such that $\Phi(a_i) = D_i$ for $i=0,\ldots,n-2$.
\end{thm}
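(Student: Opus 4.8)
The plan is to define $\Phi$ on all of the generators and then reduce the entire statement to a single periodicity property. Since we are forced to put $\Phi(a_i)=D_i$ for $0\le i\le n-2$, and the relation indexed by $i$ expresses $a_{i+n-1}$ as the product $a_ia_{i+1}\cdots a_{i+n-2}$, I would use the relations indexed by $i=0,1,\dots,n$ to \emph{define} the remaining images $\Phi(a_{n-1}),\dots,\Phi(a_{2n-1})$, setting $D_{i+n-1}:=D_iD_{i+1}\cdots D_{i+n-2}$. More conveniently, extend this to a sequence $(D_i)_{i\ge 0}$ in $\Gamma\subset\E(1)$ by the same recurrence; every $D_i$ is then a product of $D_0,\dots,D_{n-2}$, so it lies in $\Gamma$. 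With this definition the relations indexed by $0,\dots,n$ hold by construction, and every remaining relation will hold once we prove the periodicity $D_{i+2n}=D_i$, which makes the wrap-around of indices modulo $2n$ compatible with the recurrence. Surjectivity is then automatic, since $D_0,\dots,D_{n-2}$ already generate $\Gamma$ and all lie in the image. Thus the theorem reduces to showing $D_{i+2n}=D_i$ for all $i$.

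First I would analyse the orthogonal parts $\varepsilon_i:=r(D_i)\in\{\pm1\}$. Here $\varepsilon_0=1$, $\varepsilon_1=\cdots=\varepsilon_{n-2}=-1$, and the recurrence forces $\varepsilon_{i+n-1}=\varepsilon_i\varepsilon_{i+1}\cdots\varepsilon_{i+n-2}$. Using that $\prod_{j=i}^{i+n-1}\varepsilon_j=\varepsilon_{i+n-1}^2=1$, one gets $\varepsilon_{i+n}=\varepsilon_i$, so the signs are $n$-periodic; since $n$ is odd one checks $\varepsilon_{n-1}=(-1)^{n-2}=-1$, whence $\varepsilon_i=1$ exactly when $n\mid i$. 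Therefore $D_i$ is a translation when $n\mid i$ and a reflection otherwise.

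The engine of the proof is the identity $D_iD_{i+n}=D_{i+n-1}^2$, obtained by factoring the product $D_iD_{i+1}\cdots D_{i+n-1}$ of $n$ consecutive terms in two ways and applying the recurrence to each half. When $D_{i+n-1}$ is a reflection, that is $i\not\equiv1\bmod n$, its square is trivial, so $D_{i+n}=D_i^{-1}$; since $i+n\equiv i$ this index is again $\not\equiv1$, and we obtain $D_{i+2n}=D_{i+n}^{-1}=D_i$ at once. The only obstruction is the case $i\equiv1\bmod n$, where $D_{i+n-1}$ is a translation and $D_{i+n-1}^2\neq\mathrm{id}$; this is the main difficulty, and it is exactly where the nonlinearity of euclidean representations bites.

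To settle that case I would iterate the identity to get $D_{i+2n}=D_{i+n-1}^{-2}D_iD_{i+2n-1}^2$, and note that $D_{i+n-1}$ and $D_{i+2n-1}$ are translations sitting at indices divisible by $n$. Applying the reflection case at those translation indices (where $n\mid m$ forces $m\not\equiv1$) gives $D_{i+2n-1}=D_{i+n-1}^{-1}$, so, writing $u=D_{i+n-1}^2$ for the resulting translation, the expression becomes $D_{i+2n}=u^{-1}D_iu^{-1}$. Finally, since $D_i$ is a reflection it conjugates $u^{-1}$ to $u$, that is $D_iu^{-1}=uD_i$, so the two translation factors cancel and $D_{i+2n}=D_i$. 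This closes the periodicity and hence produces the required epimorphism.
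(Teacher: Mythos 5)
Your proposal is correct, and its skeleton coincides with the paper's: both reduce the theorem to proving that the recursively defined sequence \eqref{eq:dseq} is $2n$-periodic (surjectivity being automatic), and both run on the same key identity $D_iD_{i+n}=D_{i+n-1}^2$, which is exactly the paper's relation \eqref{eq:addrel} in shifted form. Where you diverge is in how the periodicity is actually verified. The paper grinds it out in coordinates: it computes every term $D_{n-1},D_n,\ldots,D_{3n-2}$ explicitly as pairs $(\pm 1, \ast)$, e.g.\ $D_n=(1,-d_0)$, $D_{n+1}=(-1,d_1-2d_0)$, and reads off $D_{2n+k}=D_k$ term by term. You instead argue coordinate-free: first the sign analysis showing $r(D_i)=1$ iff $n\mid i$, then the generic case $i\not\equiv 1 \pmod n$, where $D_{i+n}=D_i^{-1}$ because reflections in $\E(1)$ are involutions, and finally the exceptional case $i\equiv 1\pmod n$, which you close using $D_{i+2n}=D_{i+n-1}^{-2}D_iD_{i+2n-1}^{2}$ together with the fact that conjugation by a reflection inverts translations. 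Both verifications are complete; the paper's has the virtue of being a self-contained elementary computation (at the cost of depending on a fixed semidirect-product convention for $\E(1)$, which makes the intermediate translation parts convention-sensitive), while yours isolates conceptually why the period is $2n$ rather than $n$ -- the only obstruction comes from the translation generator $D_0$ and its shifts at indices $\equiv 1\pmod n$ -- and uses nothing about $\E(1)$ beyond the dihedral-type relations, so it would transport to any target group with that structure.
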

\begin{proof}
If the map $\Phi$ is a group homomorphism then obviously it is an epimorphism. It is enough for us to show that the recursively defined sequence $(D_i)$
\begin{equation}
\label{eq:dseq}
\forall_{i \geq 0} D_{i+n-1} = D_i \cdot \ldots \cdot D_{i+n-2},
\end{equation}
where the elements $D_0,\ldots,D_{n-2}$ are defined in \eqref{eq:ds}, is periodic with period $2n$. Equivalently it is enough to prove that
\[
D_0 = D_{2n}, \ldots, D_{n-2} = D_{3n-2}.
\]
Note that for every $i > 0$ we have the following formula
\begin{equation}
\label{eq:addrel}
D_{i+n-1} = D_i \cdot \ldots \cdot D_{i+n-2} = D_{i-1}^{-1} D_{i-1} D_i \ldots D_{i+n-3} D_{i+n-2} = D_{i-1}^{-1}D_{i+n-2}^2.
\end{equation}
We get:
\begin{align*}
D_{n-1} &= D_0 \ldots D_{n-2} = (-1,d_0 + d_1 - d_2 + \ldots + d_{n-2}) = (-1, d_{n-1})\\
D_n     &= D_0^{-1} D_{n-1}^2 = (1, d_0)^{-1} (-1, d_{n-1})^2 = (1, -d_0)\\
D_{n+1} &= D_1^{-1} D_n^2 = (-1, d_1)^{-1} (1, -d_0)^2 = (-1, d_1 - 2d_0)\\
D_{n+2} &= D_2^{-1} D_{n+1}^2 = (-1, d_2)^{-1} (-1, d_1 -2d_0)^2 = (-1, d_2) = D_2 \\
D_{n+3} &= D_3^{-1} D_{n+2}^2 = (-1, d_3)^{-1} (-1, d_2)^2 = (-1, d_3) = D_3\\
\vdots \\
D_{n+k} &= D_k^{-1}D_{n+k-1}^2 = (-1, d_k)^{-1} (-1, d_{k-1})^2 = (-1, d_k) = D_k\\
\vdots \\
D_{2n}  &= D_n^{-1} D_{2n-1}^2 = (1, -d_0)^{-1} (-1, d_{n-1})^2 = (1, d_0) = D_0\\
D_{2n+1}&= D_{n+1}^{-1} D_{2n}^2 = (-1, d_1 - 2d_0)^{-1} (1, d_0)^2 = (-1, d_1) = D_1\\
D_{2n+2}&= D_{n+2}^{-1}D_{2n+1}^2 = D_2^{-1} D_1^2 = D_2\\
\vdots \\
D_{2n+k}&= D_{n+k}^{-1}D_{2n+k-1}^2 = D_k^{-1}D_{k-1}^2 = D_k
\end{align*}
In the above calculations $3 \leq k \leq n-1$, hence the claim follows.
\end{proof}

\begin{cor}
\label{cor:onedimepi}
Let $n \geq 3$ be an odd integer.
Let $\Gamma \subset \E(1)$ be a one dimensional crystallographic group generated by the elements
\begin{equation}
\label{eq:onedimgens}
\setlength{\arraycolsep}{.1em}
\begin{array}{rcl}
D_0 = (-1, d_0), \ldots, D_{k-1} &=&(-1, d_{k-1}),\\
D_k &=& (1, d_k),\\
D_{k+1} &=& (-1, d_{k+1}), \ldots, D_{n-2} = (-1, d_{n-2}),
\end{array}
\end{equation}
where $0 \leq k \leq n-1$. Then there exists an epimorphism $\Phi \colon F(n-1, 2n) \to \Gamma$ such that $\Phi(a_i) = D_i$ for $i=0, \ldots, n-2$.
\end{cor}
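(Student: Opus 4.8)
The plan is to deduce the corollary from Theorem \ref{thm:onedimrep} by exploiting the shift-invariance and the reversibility of the Fibonacci recurrence \eqref{eq:dseq}. As in the proof of Theorem \ref{thm:onedimrep}, it suffices to define the sequence $(D_i)_{i \geq 0}$ from the initial data \eqref{eq:onedimgens} via \eqref{eq:dseq} and to show that it is periodic with period $2n$: the map $\Phi(a_i) = D_i$ is then a well-defined homomorphism onto $\G$ (each defining relation of $F(n-1,2n)$ becomes an instance of \eqref{eq:dseq}, the indices matching up modulo $2n$), and it is an epimorphism because its image contains the generators \eqref{eq:onedimgens}. The only difference from Theorem \ref{thm:onedimrep} is that the unique generator with rotational part $+1$ sits at position $k$ instead of position $0$, so the strategy is to slide the window of initial data so that it begins at position $k$ and thereby matches the hypothesis of Theorem \ref{thm:onedimrep}.

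First I would determine the rotational parts $\epsilon_i := r(D_i) \in \Or(1) = \{\pm 1\}$. Applying $r$ to \eqref{eq:addrel} and using that every element of $\{\pm 1\}$ is its own inverse gives $\epsilon_{i+n-1} = \epsilon_{i-1}$ for all $i \geq 1$, so $(\epsilon_i)$ is periodic with period $n$. The data \eqref{eq:onedimgens} fix $\epsilon_i = -1$ for $0 \leq i \leq n-2$ with the single exception $\epsilon_k = +1$ (the exception being vacuous when $k = n-1$), and the recurrence gives $\epsilon_{n-1} = \prod_{j=0}^{n-2} \epsilon_j$, which, since $n$ is odd, equals $-1$ when $k \leq n-2$ and $+1$ when $k = n-1$. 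In either case the fundamental block $\epsilon_0, \ldots, \epsilon_{n-1}$ contains a single $+1$, located at position $k$, so $\epsilon_m = +1$ if and only if $m \equiv k \pmod n$. In particular the shifted window $(D_k, D_{k+1}, \ldots, D_{k+n-2})$ — whose entries are the values genuinely produced by \eqref{eq:dseq}, not a naive cyclic relabelling of \eqref{eq:onedimgens} — has rotational parts $(+1, -1, \ldots, -1)$, i.e. it is exactly of the form \eqref{eq:ds} required by Theorem \ref{thm:onedimrep}.

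To turn periodicity of the shifted window into periodicity of the whole sequence I would use the state map $T \colon \G^{n-1} \to \G^{n-1}$, $T(g_1, \ldots, g_{n-1}) = (g_2, \ldots, g_{n-1}, g_1 g_2 \cdots g_{n-1})$. This map is a bijection: solving \eqref{eq:dseq} for its first factor recovers the dropped entry, so $T^{-1}(h_1, \ldots, h_{n-1}) = \bigl(h_{n-1}(h_1 \cdots h_{n-2})^{-1}, h_1, \ldots, h_{n-2}\bigr)$. Writing $v_i = (D_i, \ldots, D_{i+n-2})$ we have $v_i = T^i(v_0)$. By the previous paragraph $v_k$ is of the form \eqref{eq:ds}, so Theorem \ref{thm:onedimrep} applies to the data $v_k$ and yields $T^{2n}(v_k) = v_k$. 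Since powers of $T$ commute and $v_0 = T^{-k}(v_k)$, we obtain $T^{2n}(v_0) = T^{-k} T^{2n}(v_k) = T^{-k}(v_k) = v_0$, i.e. $(D_i)_{i \geq 0}$ has period $2n$, which finishes the argument.

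The \emph{main obstacle} is the bookkeeping of the second paragraph: one must resist the temptation to relabel the $n-1$ generators cyclically, and instead work with the true Fibonacci extension $(D_i)$, verifying through the rotational parts that the window beginning at $k$ really does fall under the hypothesis of Theorem \ref{thm:onedimrep}. Once the rotational parts are understood, the reversibility of \eqref{eq:dseq} makes the passage from periodicity at position $k$ back to position $0$ purely formal.
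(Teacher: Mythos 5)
Your proof is correct, and while it shares the paper's key idea --- apply Theorem \ref{thm:onedimrep} to the window of the recursively defined sequence beginning at position $k$, whose rotational parts are $(+1,-1,\ldots,-1)$ --- the transfer back to position $0$ is carried out by a different mechanism. The paper works partly on the Fibonacci-group side: it composes the epimorphism $\Phi'$ given by the theorem with the $k$-th power of the cyclic automorphism $\sigma(a_i) = a_{i-1}$ of $F(n-1,2n)$, which settles $\Phi(a_j) = D_j$ for $j \geq k$, and then recovers the remaining values $\Phi(a_i) = D_i$ for $0 \leq i < k$ from the derived relations $a_i a_{i+n} = a_{i+n-1}^2$ combined with \eqref{eq:addrel}. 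You instead work entirely on the $\Gamma$ side: you extract from the theorem the periodicity $T^{2n}(v_k) = v_k$ for the state map $T$ (this is exactly what its proof establishes; it also follows from the statement alone, since $\Phi'(a_{i \bmod 2n})$ obeys the same recurrence with the same initial window), observe that $T$ is invertible because \eqref{eq:dseq} can be solved backwards, and conclude $T^{2n}(v_0) = T^{-k}T^{2n}(v_k) = v_0$, i.e. $2n$-periodicity of the whole sequence, from which the existence of $\Phi$, the values $\Phi(a_i) = D_i$, and surjectivity are all immediate. Your $T^{-1}$ is precisely the sequence-level counterpart of the paper's derived relation, so the two arguments are parallel; what yours buys is that it needs no automorphism of $F(n-1,2n)$, and moreover your first paragraph (the period-$n$ analysis of the rotational parts, showing $\epsilon_m = +1$ if and only if $m \equiv k \pmod n$) makes explicit a verification that the paper silently assumes when it asserts that Theorem \ref{thm:onedimrep} applies to the shifted window $(D_k,\ldots,D_{k+n-2})$. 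The paper's route, in exchange, uses the theorem purely as a black box (an epimorphism exists) rather than reopening its proof for the periodicity statement.
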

\begin{proof}
Let $(D_i)$ be a sequence of elements of $\G$ defined recursively as in \eqref{eq:dseq}.
By Theorem \ref{thm:onedimrep} there exists an epimorphism $\Phi' \colon F(n-1, 2n) \to \Gamma$ such that
\[
\forall_{0 \leq i < 2n}
\Phi'(a_i) = D_{k+i}.
\]
Now let $\sigma \in \Aut(F(n-1,2n))$ be the automorphism sending $a_i$ to $a_{i-1}$ (subscripts are taken modulo $2n$). 
Let $\Phi = \Phi' \sigma^k$. We get
\[
\forall_{0 \leq i < 2n}
\Phi(a_{k+i}) = \Phi'\sigma(a_{k+i}) = \Phi'(a_i) = D_{k+i}.
\]
Similarly as in \eqref{eq:addrel} we have the following relations in $F(n-1,2n)$:
\[
\forall_{0 \leq i < 2n} a_i a_{i+n} = a_{i+n-1}^2.
\]
Using equation \eqref{eq:addrel} again we have
\[
\forall_{0 \leq i < k} \Phi(a_i) = \Phi(a_{i+n-1}^2a_{i+n}^{-1}) = D_{i+n-1}^2D_{i+n}^{-1} = D_i.
\]
Summing up we get a homomorphism $\Phi \colon F(n-1,2n) \to \Gamma$ with the property  
\[
\forall_{0 \leq i \leq n-2} \Phi(a_i) = D_i.
\]
\end{proof}

\section{The theorem}

In the following section we shall prove the main theorem of the article:
\begin{thm}
Let $n \geq 3$ be an odd integer. Let $\G \subset E(n)$ be a HW-group of dimension $n$. Then there exists an epimorphism
\[
\Phi \colon F(n-1, 2n) \to \G.
\]
\end{thm}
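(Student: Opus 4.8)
The plan is to assemble the main theorem from the three preparatory results already established: the decomposition of the identity on a HW-group (Corollary \ref{cor:hwdecomp}), the construction of one-dimensional epimorphisms from the Fibonacci group (Corollary \ref{cor:onedimepi}), and the fact that $F(n-1,2n)$ admits many such epimorphisms onto the one-dimensional factors. The guiding idea is that a HW-group $\G \subset \E(n)$, by Theorem \ref{thm:hwform}, sits inside the diagonal isometries of $\R^n$, so every element has the block-diagonal form displayed in Corollary \ref{cor:hwdecomp}, and the identity map factors as $\mathrm{id}_\G = \varphi^{(1)} \oplus \cdots \oplus \varphi^{(n)}$ into $n$ one-dimensional euclidean representations $\varphi^{(i)} \colon \G \to \E(1)$. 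The strategy is to build an epimorphism $F(n-1,2n) \to \G$ coordinatewise and then glue.

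\medskip

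First I would fix the generators $(B_i,b_i)$ of $\G$ for $1 \leq i \leq n-1$ coming from Theorem \ref{thm:hwform}, and examine, for each fixed coordinate $j$ with $1 \leq j \leq n$, the images $\varphi^{(j)}(B_i,b_i) = ((B_i)_j, (b_i)_j) \in \E(1)$. Because $B_i = \diag(-1,\ldots,-1,1,-1,\ldots,-1)$ has a single $+1$ in position $i$ and $-1$ elsewhere, the rotational part $(B_i)_j$ equals $+1$ exactly when $j = i$ and equals $-1$ otherwise. Thus, reading off the $j$-th coordinate, the generators $\varphi^{(j)}(B_1,b_1), \ldots, \varphi^{(j)}(B_{n-1},b_{n-1})$ of the one-dimensional group $\G_j := \varphi^{(j)}(\G) \subset \E(1)$ are precisely a sequence of $n-1$ elements of $\E(1)$ in which exactly one (the $j$-th, when $j \leq n-1$) has rotational part $+1$ and the rest have rotational part $-1$. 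This is exactly the shape \eqref{eq:onedimgens} required by Corollary \ref{cor:onedimepi}, with $k = j$ in the range $1 \leq j \leq n-1$; the remaining case $j = n$ gives all $n-1$ generators with rotational part $-1$, which also fits \eqref{eq:onedimgens} by taking $k$ outside the index range so that no $+1$ entry appears among $a_0,\ldots,a_{n-2}$. Hence Corollary \ref{cor:onedimepi} supplies, for each $j$, an epimorphism $\Phi_j \colon F(n-1,2n) \to \G_j$ with $\Phi_j(a_{i-1}) = \varphi^{(j)}(B_i,b_i)$ for $1 \leq i \leq n-1$.

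\medskip

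Next I would glue these coordinatewise maps into a single homomorphism $\Phi \colon F(n-1,2n) \to \E(n)$ by setting $\Phi = \Phi_1 \oplus \cdots \oplus \Phi_n$ in the sense of the direct sum of euclidean representations introduced in the Remark following Example \ref{ex:cyclichw}. Concretely, on each generator $a_{i-1}$ one has $\Phi(a_{i-1}) = (B_i, b_i)$, since the $j$-th coordinate of the right-hand side is exactly $\Phi_j(a_{i-1}) = \varphi^{(j)}(B_i,b_i)$. Because each $\Phi_j$ is a homomorphism respecting the Fibonacci relations, the direct sum $\Phi$ also respects those relations (each relation is checked coordinate by coordinate in the abelian-by-coordinate factors of $\E(1)$), so $\Phi$ is a well-defined group homomorphism into $\E(n)$ whose image contains all the generators $(B_i,b_i)$ of $\G$. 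It is therefore an epimorphism onto $\G$.

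\medskip

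I expect the main subtlety to be the bookkeeping of indices in the boundary case $j = n$, where none of the listed generators $a_0,\ldots,a_{n-2}$ carries a $+1$ rotational part, and in reconciling the indexing conventions: Corollary \ref{cor:onedimepi} numbers its generators $D_0,\ldots,D_{n-2}$ and allows $0 \leq k \leq n-1$, whereas the HW generators are naturally numbered $1 \leq i \leq n-1$, so I would carefully align $D_{i-1}$ with $\varphi^{(j)}(B_i,b_i)$ and confirm that the single $+1$ entry (when $j \leq n-1$) lands at the index $k = j-1$ demanded by \eqref{eq:onedimgens}. The conceptual content, by contrast, is light: once the one-dimensional epimorphisms exist and the direct-sum decomposition of the identity is in hand, gluing is formal. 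The genuine work was already done in Theorem \ref{thm:onedimrep} and its corollary, so the final proof is essentially an application of Corollary \ref{cor:hwdecomp} and Corollary \ref{cor:onedimepi} together.
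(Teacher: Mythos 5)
Your proposal is correct and follows essentially the same route as the paper's own proof: apply Theorem \ref{thm:hwform} and Corollary \ref{cor:hwdecomp} to split $id_\G$ into $n$ one-dimensional euclidean representations, observe that each $\varphi^{(j)}(\G)$ is generated by elements of the form \eqref{eq:onedimgens}, invoke Corollary \ref{cor:onedimepi} coordinatewise, and glue the resulting epimorphisms via the direct sum $\Phi = \Phi_1 \oplus \cdots \oplus \Phi_n$. The only differences are cosmetic (your $1$-based indexing versus the paper's $0$-based, and your explicit discussion of the boundary coordinate $j=n$, which the paper subsumes silently under $k=n-1$), and your final-paragraph correction $k=j-1$ resolves the earlier indexing slip.
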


\begin{proof}
By Theorem \ref{thm:hwform} we can assume that
\[
\G = \langle (B_i, b_i) \st 0 \leq i \leq n-2 \rangle,
\]
where
\[
B_i = \diag(\underbrace{-1, \ldots, -1}_{i}, 1, -1, \ldots, -1)
\]
and $b_i \in \frac{1}{2}\Z^n$ for $i=0, \ldots, n-2$. 
Let
\[
id_\G = \varphi^{(1)} \oplus \ldots \oplus \varphi^{(n)}
\]
be the decomposition of the identity map on $\G$ as in Corollary \ref{cor:hwdecomp}. For every $0 \leq k \leq n-1$ the group $\varphi^{(k+1)}(\G)$ is a one dimensional crystallographic group, generated by the elements of the form \eqref{eq:onedimgens}. By Corollary \ref{cor:onedimepi} for every $1 \leq k \leq n$ we get an epimorphism $\Phi_k \colon F(n-1, 2n) \to \varphi^{(k)}(\G)$ such that
\[
\forall_{0 \leq i \leq n-2} \Phi_k(a_i) = \varphi^{(k)} (B_i, b_i).
\]
It is easy to see that
\[
\Phi = \Phi_1 \oplus \ldots \oplus \Phi_n
\]
is the desired homomorphism and the claim follows.
\end{proof}

\bibliographystyle{amsplain}
\bibliography{bibl}

\end{document}